\setlist[enumerate,1]{label={(\arabic*)}}
\newcommand{\fp}{\mathfrak{p}}
\newcommand{\fm}{\mathfrak{m}}
\newcommand{\bF}{\mathbb{F}}
\newcommand{\bL}{\mathbb{L}}
\newcommand{\bZ}{\mathbb{Z}}
\newcommand{\cC}{\mathcal{C}}
\newcommand{\Ch}{\operatorname{Ch}}
\newcommand{\curv}{\operatorname{curv}}
\newcommand{\Ho}{\operatorname{Ho}}
\newcommand{\fmod}{\operatorname{mod}}
\newcommand{\Mod}{\operatorname{Mod}}
\newcommand{\pd}{\operatorname{pd}}
\newcommand{\rank}{\operatorname{rank}}
\newcommand{\RHom}{\operatorname{RHom}}
\newcommand\Spec{\operatorname{Spec}}
\newcommand{\thick}{\operatorname{thick}}
\newcommand{\Tor}{\operatorname{Tor}}
\newtheorem{thm}{Theorem}[section]
\newtheorem{prop}[thm]{Proposition}
\newtheorem{lemma}[thm]{Lemma}
\newtheorem{cor}[thm]{Corollary}
\theoremstyle{definition}
\newtheorem{definition}[thm]{Definition}
\newtheorem{remark}{Remark}
\title[Relative Frobenius morphism]{Homological properties of the \\ relative Frobenius morphism}
\author{Peter M. McDonald}
\address{Department of Mathematics, Statistics, and CS, University of Illinois at Chicago, Chicago, IL 60607, USA}
\begin{document}

\maketitle

\begin{abstract}
This work concerns maps of commutative noetherian local rings containing a field of positive characteristic. Given such a map $\varphi$ of finite flat dimension, the results relate homological properties of the relative Frobenius of $\varphi$ to those of the fibers of $\varphi$ with a focus on the complete intersection property.
\end{abstract}

\section{Introduction}

Given a ring $R$ of characteristic $p>0$, the Frobenius endomorphism is the map $F\colon R\to R$ sending $r$ to $r^p$. Given an $R$-module $M$, the $R$-module structure on $M$ via restriction of scalars along the $e$-fold Frobenius is denoted by $F^e_*M$. Studying properties of this module structure allows us to understand the singularities of $R$, thanks to a theorem of Kunz \cite{Kun69} which says that $R$ is regular if and only if $F^e\colon R\to R$ is flat for some (equivalently all) $e>0$. Work of Rodicio \cite{Rod88} generalizes Kunz's result to the case where the Frobenius has finite flat dimension, and work of Blanco-Majadas \cite{BM98}, Takahasi-Yoshino \cite{TY04}, and Iyengar-Sather-Wagstaff \cite{ISW04} does the same for other homological dimensions.

In this paper, we work instead in the relative setting. Given a map of commutative rings of positive characteristic $\varphi\colon R\to S$, we consider the following diagram:
\[
\begin{tikzcd}
R\arrow[r,"\varphi"]\arrow[d,"F"]&S\arrow[d]\arrow[rd,"F"]& \\
F_*R\arrow[r]&S\otimes_RF_*R\arrow[r,"F_{S/R}",swap]& F_*S
\end{tikzcd}
\]
where $F_{S/R}$ is the \emph{relative Frobenius} of $\varphi$ induced by the universal property of the tensor product. It sends $s\otimes r$ to $s^pr$. Recall that a map $\varphi\colon R\to S$ is regular if it flat and has geometrically regular fibers. We have a relative version of Kunz's theorem due to Radu and Andr\'e \cite{Rad92,And93, Andre:1994} which says that $\varphi$ is regular if and only if $F_{S/R}$ is flat for some (equivalently, all) $e\geq1$.  This implies Kunz's theorem by considering the following diagram:
\[
\begin{tikzcd}
\bF_p\arrow[r,"\varphi"]\arrow[d,"F=1_{\bF_p}"]&R\arrow[d,equal]\arrow[rd,"F"]& \\
\bF_p\arrow[r,"\varphi"]&R\arrow[r]& F_*R
\end{tikzcd}
\]

Taking this result as inspiration, we investigate how homological properties of the relative Frobenius are reflected in homological properties of the Frobenius on the (derived) fibers of $\varphi\colon R\to S$. In this direction, work of Dumitrescu \cite{Dum96} shows that the original result is equivalent to $\varphi$ being flat and $F_{S/R}$ having finite flat dimension, and recent work of Alvite-Barral-Majadas \cite{ABM22} extends this to the non-Noetherian setting.

Assuming $R$ is a local ring with residue field $k$, the derived fiber of $\varphi$ refers to $S\otimes_R^\bL k$, thought of as a simplicial $k$-algebra. When $\varphi$ is flat, this is simply $S\otimes_R k$. The Frobenius extends naturally to the setting of simplicial rings by applying the classical Frobenius degreewise, and so we can consider $F\colon S\otimes_R^\bL k\to S\otimes_R^\bL k$. This leads us to our main result.

\begin{thm}[\ref{main}]\label{main1}
Let $\varphi\colon R\to S$ be a map of $F$-finite local rings of positive characteristic and let $k_R$ be the residue field of $R$. If $\varphi$ has finite flat dimension, then
\[
\curv_{\bar{S'}}(F_*\bar{S'})\leq\curv_A(F_*S)\leq\max\{\curv_{\bar{S'}}(F_*\bar{S'}),1\}
\]
where $\bar{S'}:=S\otimes_R^\bL k'$ for $k'$ any finite, purely inseparable extension of $k_R$.
\end{thm}

Here, curvature of a finitely generated module $M$ over a local ring $T$ is defined as
\[
\curv_T(M):=\limsup_n\sqrt[n]{\beta^T_n(M)}
\]
and the definition can be extended to modules over simplicial rings (see Definitions \ref{betti} and \ref{curv}). Curvature measures the exponential growth rate of the Betti numbers of $M$, so in more informal language this result shows that the Betti numbers of the relative Frobenius $F_{S/R}$ grow at the same rate as the Betti numbers of the Frobenius on $S\otimes_R^\bL k$.

As a corollary, we recover the results of Radu \cite{Rad92}, Andr\'e \cite{And93,Andre:1994} and Dumitrescu \cite{Dum96} (concerning regularity) and Blanco-Majadas \cite{BM98} (concering the complete intersection property) while also weakening the hypothesis that $\varphi$ is flat to requiring that $\varphi$ has finite flat dimension.

\begin{cor}[\ref{flatfrob}, \ref{CI}]
Let $\varphi\colon R\to S$ be a map of $F$-finite local rings of positive characteristic and $\fm_S$ be the maximal ideal of $S$. If $\varphi$ has finite flat dimension, then $\varphi$ is regular (resp. complete intersection at the maximal ideal of $S$) if and only if $F_*S$ has finite flat (resp. CI-) dimension over $S\otimes_RF_*R$.
\end{cor}

\if0

We also prove a similar result for the Gorenstein property.

\begin{thm}
Let $R$ and $S$ be $F$-finite local rings of positive characteristic. If $\varphi\colon R\to S$ is a flat local homomorphism, then the fibers of $\varphi$ are Gorenstein if and only if $F_{S/R}$ has finite G-dimension.
\end{thm}

Note that we require $\varphi$ to be flat here rather than finite flat dimension as in the previous results. The reason for this is that for $\varphi$ of finite flat dimension, we must consider the \emph{derived} fibers of $\varphi$. We plan to take this up on a later occasion.

\fi

\subsection*{Acknowledgements}
This material is based on work supported by the National Science Foundation under grants DMS-200985 and RTG DMS-1840190. The author would like to thank his advisor Srikanth Iyengar for many helpful comments and conversations and his continued support in the writing of this paper, as well as Rankeya Datta, Linquan Ma, Daniel McCormick, Brendan Murphy, and Karl Schwede for additional helpful conversations.

\section{Simplicial rings}

Going forward, our rings will be commutative and noetherian. Given a local ring $R$, we will write $\fm_R$ for its maximal ideal and $k_R$ for its residue field. Given a map of local rings $\varphi\colon R\to S$, the proof of the Radu-Andr\'e theorem relies, in part, on investigating the properties of the Frobenius on the fibers of $\varphi$.

In general, we do not assume $\varphi$ is flat, so we instead want to consider its derived fiber, $S\otimes_R^\bL k_R$. In order to talk about the Frobenius on $S\otimes_R^\bL k_R$, we think of $S\otimes_R^\bL k_R$ as a simplicial $k_R$-algebra, in which case the Frobenius is simply the usual Frobenius applied degreewise. Compare this to the differential graded setting, where the $p^{th}$ power map is not a map of differential graded algebras because it is not degree-preserving.

We discuss the relevant properties of simplicial rings in this section; see \cite[Section~2.6]{Qui67}. Given a simplicial ring $A$, we write $\Mod(A)$ for the category of simplicial $A$-modules with the projective model structure and $\Ho(A)$ for the corresponding homotopy category.

\subsection*{Dold-Kan correspondence} One powerful tool available to us in this setting is the \emph{Dold-Kan correspondence}. Given a classical ring $R$ we can view it as a simplicial ring $sR$ that has $R$ in every degree and the identity for every face and degeneracy map. We call such simplicial rings \emph{discrete}. The Dold-Kan correspondence then says that there is an equivalence of categories 
\[\begin{tikzcd}
    \Mod(sR)\arrow[r, "N",  yshift=-5,swap]\arrow[r,phantom,"\simeq"]&\Ch_{\geq0}(R)\arrow[l, "\Gamma", yshift=5,swap]
\end{tikzcd}
\]
that preserves the natural (projective) model structures, that is to say, it is a Quillen equivalence. If we upgrade the setting to a simplicial ring $A$, then $N(A)$ is a differential graded ring and we can consider $\Mod_{\geq0}(N(A))$, the category of dg $N(A)$-modules in nonnegative homological degree. \cite[Theorem~1.1]{SS03} gives a Quillen equivalence between $\Mod(A)$ and $\Mod_{\geq0}(N(A))$ where $N$ is the right adjoint (note that $\Gamma$ is not a left adjoint in this case \cite[Section~3.3]{SS03}). Homotopy groups $\pi_*(M)$ of an $A$-module $M$ can be computed by first applying the normalization functor and computing homology of $N(M)$ as a dg $N(A)$-module, so we will use these equivalences to move freely between the simplicial and dg settings.

\begin{remark}\label{post}
    Given a simplicial ring $A$ and an integer $n \geq 0$, there is a simplicial ring $B$ and a map of simplicial rings $\varphi\colon A \to B$ with the following properties:
\begin{enumerate}
    \item[(1)] $\pi_i(B) = 0$ for $i \geq n+1$;
    \item[(2)] $\pi_i(\varphi)$ is bijective for $i \leq n$.
\end{enumerate}
The map can be obtained by a process of killing the homology in $A$ in degree $n+1$ and higher; see also the discussion on \cite[pp.~162]{Toe10} for the construction of $\varphi$. This is part of the data of a Postnikov tower for $A$.
\end{remark}

\subsection*{The Koszul Complex} Another construction we use is the Koszul complex. To start, consider the ordinary ring $\bZ[x]$. The \emph{Koszul complex on $\bZ[x]$ with respect to $x$} is the complex 
\[\begin{tikzcd}
    0\arrow[r]&\bZ[x]\arrow[r,"x"]&\bZ[x]\arrow[r]&0
\end{tikzcd}\]
which we denote $K[\bZ[x];x]$. Passing along the Dold-Kan correspondence, we can consider this complex as a simplicial module over the discrete simplicial ring $\bZ[x]$, which we also call $K[\bZ[x];x]$. Note that $N(K[\bZ[x];x])=K[\bZ[x];x]$.

Given a simplicial ring $A$ with degree zero ring $A_0$ a local ring with maximal ideal $\fm$ and residue field $k$, consider $f\in\fm$. Consider $\bZ[x]\to A_0$ sending $x\mapsto f$. This extends to $\bZ[x]\to A$ and define the Koszul complex of $A$ with respect to $f$ to be
\[K[A;f]\colon=A\otimes_{\bZ[x]}K[\bZ;x]\]
Given a sequence $f_1,\dots,f_k\in\fm$, we can define the Koszul complex of $A$ with respect to $f_1,\dots,f_k$ by either iterating the above construction for maps $\bZ[x_{i+1}]\to K[A;f_1,\dots,f_i]$ sending $x_{i+1}\mapsto f_{i+1}$ or by considering $\bZ[x_1,\dots,x_k]\to A$ sending $x_i\mapsto f_i$ and taking the appropriate tensor product as above.

Given an $A$-module $M$, the \emph{Koszul complex on $M$ with respect to $f_1,\dots,f_k$} is
\[K[M;f_1,\dots,f_k]\colon =M\otimes_A K[A;f_1,\dots,f_k].\]
We write $K^A$ for the Koszul complex on a minimal generating set for the ideal $\fm$ and $K^M$ for $M\otimes_A K^A$.

\begin{remark}
    Let $A$ be as above with $f\in\fm$. The Koszul complex on $A$ with respect to $f$ is directly inherited from the Koszul complex on $A_0$ with respect to $f$ via the isomorphism
    \[A\otimes_{\bZ[x]}K[\bZ[x];x]\cong A\otimes_{A_0}A_0\otimes_{\bZ[x]}K[\bZ[x];x]\cong A\otimes_{A_0}K[A_0;f]. \]
    This also implies
    \[K[M;f]\cong M\otimes_{A_0}K[A_0,f]\]
\end{remark}

\subsection*{Betti numbers}

Over ordinary commutative rings, Betti numbers are important homological invariants. In this section, we define them for modules over simplicial rings and prove several lemmas about how they behave along maps of rings.

\begin{definition}
    Let $A$ be a commutative simplicial ring. We say that $A$ is \emph{local} if $A_0$ is a noetherian local ring, in which case $\pi_0(A)$ is also a noetherian local ring. If $k$ is the residue field of $A_0$, we write $(A,k)$. Given $M\in\Mod(A)$, we say that $M$ is a \emph{finite} $A$-module if $\pi_i(M)$ is a finite $\pi_0(A)$-module for each $i$ and $\pi_i(M)=0$ for $i\gg0$. We write $\fmod(A)$ for the category of finite $A$-modules.
\end{definition}

\begin{definition}\label{betti}
Let $(A,k)$ be a local simplicial ring and $M\in\fmod(A)$. Define the \emph{$i^{th}$ Betti number} of $M$ as
\[\beta^A_i(M)\colon =\rank_{k}\pi_i(M\otimes_A^\bL k)\]
where $M\otimes_A^\bL k$ is computed by applying $-\otimes_A k$ to a cofibrant replacement of $M$ (equivalently applying $M\otimes_A -$ to a cofibrant replacement of $k$).

The formal power series 
\[P_M^A(t)\colon =\sum_{n=0}^\infty\beta_n^A(M)t^n\]
is the \emph{Poincar\'e series} of $M$ over $A$.
\end{definition}

The homological properties we are interested in are captured in the asymptotic properties of Betti numbers. Specifically, we are interested in the notion of curvature, first introduced for modules over an ordinary commutative ring by Avramov in \cite{Avr96}.

\begin{definition}\label{curv}
    Let $(A,k)$ be a local simplicial ring and $M\in\fmod(A)$. The \emph{curvature} of $M$ is the number
    \[\curv_A M=\lim\sup_n\sqrt[n]{\beta_n^A(M)}\]
    This is the inverse of the radius of convergence of the Poincar\'e series of $M$ over $A$.
\end{definition}

While we prefer to work in the simplicial setting in order to use the Frobenius, we will occasionally use the following lemma to pass along the Dold-Kan correspondence to the dg algebra setting for computations involving Betti numbers.

\begin{lemma}\label{dg}
    Let $(A,k)$ be a local simplicial ring and $M\in\fmod(A)$. Then
    \[\beta_i^A(M)=\beta_i^{N(A)}(N(M))=\rank_k\Tor_i^{N(A)}(N(M),k)\]
\end{lemma}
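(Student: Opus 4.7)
The plan is to unpack the definition of $\beta_i^A(M)$ and then apply the Quillen equivalence of \cite{SS03} between $\Mod(A)$ and $\Mod_{\geq 0}(N(A))$ mentioned in the excerpt, together with the fact that this equivalence is compatible with derived tensor products.

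First I would take a cofibrant replacement $P \xrightarrow{\sim} M$ in $\Mod(A)$, so that by definition $\pi_i(M \otimes_A^{\bL} k) = \pi_i(P \otimes_A k)$. Since the normalization functor $N$ preserves weak equivalences and $\pi_i$ of a simplicial module agrees with $H_i$ of its normalization, we get
\[
\beta_i^A(M) = \rank_k H_i(N(P \otimes_A k)).
\]
The next step is to identify $N(P \otimes_A k)$ with a model for $N(M) \otimes_{N(A)}^{\bL} N(k)$. For this I would invoke the (lax) monoidal structure on $N$ furnished by the Alexander–Whitney map: the natural transformation
\[
N(X) \otimes_{N(A)} N(Y) \longrightarrow N(X \otimes_A Y)
\]
is a quasi-isomorphism whenever one of the factors is cofibrant, which is exactly the sense in which the Schwede–Shipley Quillen equivalence is compatible with the derived tensor product. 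Applying this with $X = P$ (cofibrant) and $Y = k$ gives a quasi-isomorphism $N(P) \otimes_{N(A)} N(k) \xrightarrow{\sim} N(P \otimes_A k)$.

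The final step is to observe that $k$ is discrete, so $N(k) = k$ concentrated in degree zero, and that $N(P)$ is a cofibrant $N(A)$-module (by the Quillen equivalence of \cite[Theorem~1.1]{SS03}) computing $N(M)$ in $\Ho(N(A))$. Thus
\[
H_i(N(P) \otimes_{N(A)} k) = \Tor_i^{N(A)}(N(M), k),
\]
and combining the two identifications yields the desired equality $\beta_i^A(M) = \rank_k \Tor_i^{N(A)}(N(M), k) = \beta_i^{N(A)}(N(M))$.

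The only nontrivial point is the compatibility of $N$ with the derived tensor product, i.e.\ the claim that the Alexander–Whitney map is a quasi-isomorphism on cofibrant modules; this is the main obstacle, but it is standard and follows from the monoidal refinement of the Dold–Kan Quillen equivalence, so I would simply cite \cite{SS03} rather than redo it.
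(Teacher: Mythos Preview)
Your overall shape is the same as the paper's---take a cofibrant replacement $P\to M$, normalize, and compare with the dg derived tensor product---but there is a genuine gap in the step where you assert that $N(P)$ is cofibrant in $\Mod_{\geq 0}(N(A))$ ``by the Quillen equivalence of \cite[Theorem~1.1]{SS03}.'' As the paper itself emphasizes just before the lemma, in that Quillen equivalence $N$ is the \emph{right} adjoint (and $\Gamma$ is not a left adjoint either). Right Quillen functors preserve fibrant objects and acyclic fibrations; they are not required to preserve cofibrant objects. So the inference ``$P$ cofibrant $\Rightarrow$ $N(P)$ cofibrant'' is unjustified, and without it you cannot conclude that $N(P)\otimes_{N(A)}k$ computes $\Tor^{N(A)}_*(N(M),k)$.

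The paper's proof sidesteps exactly this issue: after taking a cofibrant replacement $F\to M$, it passes to a further semi-free dg resolution $F'\to N(F)$ and then invokes \cite[Proposition~2.2]{Avr99} to see that the composite
\[
F'\otimes_{N(A)}N(k)\longrightarrow N(F)\otimes_{N(A)}N(k)\longrightarrow N(F\otimes_A k)
\]
is a quasi-isomorphism. That proposition supplies precisely what your argument is missing: it handles the comparison without any cofibrancy claim on $N(F)$, using instead that $(F')^{\natural}$ is free over $N(A)^{\natural}$. Relatedly, your appeal to the Alexander--Whitney map as a quasi-isomorphism for the \emph{relative} tensor product over $N(A)$ (as opposed to over the ground ring) is more delicate than a bare citation of monoidality of Dold--Kan would cover; this is again what \cite[Proposition~2.2]{Avr99} is doing for the paper. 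Your argument is easily repaired by inserting the semi-free resolution $F'\to N(P)$ and citing that proposition, at which point it coincides with the paper's proof.
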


\begin{proof}
\pushQED{\qed}
    Let $F\to M$ be a cofibrant replacement and let $F'\to N(F)$ be a free resolution. Letting $(-)^\natural$ be the functor that forgets differentials, $(F')^\natural$ is a free $N(A)^\natural$-module and so, by \cite[Proposition~2.2]{Avr99}, the following map is a quasi-isomorphism
    \[F'\otimes_{N(A)}N(k)\to N(F)\otimes_{N(A)}N(k)\to N(F\otimes_Ak)\,. \qedhere
    \]
\end{proof}

The benefit of this is that we can view $\Mod_{\geq0}(N(A))$ inside of $\Mod(N(A))$ which has a stable homotopy category which we denote $\Ho(N(A))$. We again use the projective model structure, which restricts to the projective model structure on $\Mod_{\geq0}(N(A))$.

\begin{lemma}\label{thick}
    Let $(A,k)$ be a local simplicial ring and $M\in\fmod(A)$. If $N(M')\in\thick(N(M))$ (as a subcategory of $\Ho(N(A))$), then $\curv_A(M')\leq\curv_A(M)$.
\end{lemma}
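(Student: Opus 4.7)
The plan is to reduce to the dg setting via Lemma \ref{dg} and then exhibit a thick subcategory of $\Ho(N(A))$ containing $N(M)$ whose objects all have curvature at most $\curv_A(M)$. By Lemma \ref{dg}, we may identify $\beta_i^A(X)$ with $\rank_k\Tor_i^{N(A)}(N(X),k)$, so it suffices to prove the corresponding statement about $\Tor$ in $\Ho(N(A))$. Set $c\colon=\curv_A(M)$ and define
\[
\cC\colon=\Bigl\{X\in\Ho(N(A))\;\Big|\;\limsup_{n}\sqrt[n]{\rank_k\Tor_n^{N(A)}(X,k)}\leq c\Bigr\}.
\]
By construction $N(M)\in\cC$, so it is enough to show that $\cC$ is a thick subcategory, since then $\thick(N(M))\subseteq\cC$ and in particular $N(M')\in\cC$, which is the desired inequality.

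To verify thickness, I would check the three defining closure properties in turn. For shifts, the equality $\Tor_n^{N(A)}(\Sigma X,k)=\Tor_{n-1}^{N(A)}(X,k)$ gives $\beta_n^{N(A)}(\Sigma X)=\beta_{n-1}^{N(A)}(X)$, and since $\sqrt[n]{a_{n-1}}$ and $\sqrt[n-1]{a_{n-1}}$ have the same limsup, curvature is preserved. For retracts, if $X$ is a summand of $Y$ then $\Tor_n^{N(A)}(X,k)$ is a summand of $\Tor_n^{N(A)}(Y,k)$, so $\beta_n^{N(A)}(X)\leq\beta_n^{N(A)}(Y)$, giving the inequality on curvatures. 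For cones, given a distinguished triangle $X\to Y\to Z\to\Sigma X$ in $\Ho(N(A))$, apply $-\otimes_{N(A)}^\bL k$ and take the long exact sequence of homology to obtain
\[
\beta_n^{N(A)}(Y)\leq\beta_n^{N(A)}(X)+\beta_n^{N(A)}(Z)\leq 2\max\bigl(\beta_n^{N(A)}(X),\beta_n^{N(A)}(Z)\bigr).
\]
Taking $n$th roots and limsups then yields $\curv\leq\max$ on the two outer terms, so $Y\in\cC$ whenever $X,Z\in\cC$.

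The main obstacle, if any, is primarily bookkeeping: ensuring that the long exact $\Tor$ sequence produced by the triangle is the one induced by $-\otimes_{N(A)}^\bL k$ on $\Ho(N(A))$, and that the Dold--Kan comparison of Lemma \ref{dg} genuinely applies to every object of $\thick(N(M))$ (not just to those arising from $\fmod(A)$) so that the curvature inequality in $\Ho(N(A))$ transfers back to $\curv_A(M')$. Once these points are settled, the argument is essentially formal: the three closure properties each follow from an elementary manipulation of $n$th roots, and the containment $N(M)\in\cC$ is immediate from the definition of $c$.
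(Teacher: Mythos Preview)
Your proof is correct and follows essentially the same approach as the paper's: both reduce to the dg setting via Lemma~\ref{dg}, define the subcategory of $\Ho(N(A))$ consisting of objects with curvature at most $\curv_A(M)$, and verify thickness by handling shifts and summands trivially and cones via the long exact $\Tor$ sequence. Your self-flagged concern about Lemma~\ref{dg} applying to all of $\thick(N(M))$ is not an obstacle, since you only need to invoke it for $N(M')$ itself (which comes from a simplicial module), not for intermediate objects in the thick closure.
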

\begin{proof} \pushQED{\qed}
    By Lemma \ref{dg}, we can compute Betti numbers and hence curvature as $N(A)$-modules. Since $N(M)\otimes_{N(A)}^\bL k$ is independent of whether we work in the category of unbounded or nonnegatively graded dg $N(A)$-modules, we can work in the category of unbounded modules. We now claim that the following is a thick subcategory of $\Ho(N(A))$:
    \[\cC_M\colon =\left\{X\in\Ho(N(A))~\colon~\curv_{N(A)}(X)\leq\curv_{N(A)}(N(M))\right\}.\]
    It is clearly closed under suspensions and summands. Suppose $X\to Y\to Z\to$ is a triangle in $\Ho(N(A))$ with $X,Y\in\cC_M$. Applying $-\otimes_{N(A)}^\bL k$ to this triangle and considering the long exact sequence in homology we get
    \[\curv_{N(A)}(Z)\leq\max\{\curv_{N(A)}(X),\curv_{N(A)}(Y)\}\leq\curv_{N(A)}(N(M)). \qedhere\]
\end{proof}

We work in the relative setting with $\varphi\colon A\to B$ a map of simplicial rings and $M\in\mod(B)$. While this could pose problems, as a finite module over the target ring may no longer be finite over the source, we will avoid this by only treating the case where $\varphi$ is finite. A definition for relative Betti numbers and curvature in the setting of ordinary commutative rings without the assumption that $\varphi$ is finite can be found in \cite{AIM06} and can be generalized to the simplicial setting. However, we include the simplifying assumption that $\varphi$ is finite to avoid clouding the essential parts of our argument with technical details.

These next two results discuss how curvature changes along certain maps. This first lemma is adapted, with similar proof, from \cite[Proposition~3.3]{AHIY12}.
    
\begin{lemma}\label{comp}
    Let $(A,k_A)$ and $(B,k_B)$ be local simplicial rings and $M,N\in\fmod(B)$. Let $A\to B$ be a finite map of local simplicial rings. Then
    \[\curv_A(M\otimes_B^\bL N)\leq\max\{\curv_A(M),\curv_B(N)\}.\]
\end{lemma}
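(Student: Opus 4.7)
The plan is to reduce this statement to a coefficient-wise Poincar\'e series inequality
\[P^A_{M \otimes_B^{\bL} N}(t) \preceq P^A_M(t) \cdot P^B_N(t),\]
where $\preceq$ denotes coefficient-wise comparison in $\bN[[t]]$. Once this inequality is in hand, the curvature bound is automatic: the radius of convergence of a product of power series with nonnegative coefficients is the minimum of the radii of the two factors, and curvature is the inverse of the radius of convergence of the Poincar\'e series. The hypothesis that $A \to B$ is finite ensures that $M$, being finite over $B$, is also finite over $A$, so that $\beta_j^A(M)$ and hence $\curv_A(M)$ are defined.

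To prove the series inequality, I pass to the dg setting via Lemma~\ref{dg}, which translates $A$- and $B$-Betti numbers of simplicial modules into Tor-ranks over $N(A)$ and $N(B)$. Next, I choose a minimal semi-free resolution $F \xrightarrow{\simeq} N(N)$ of $N(N)$ over $N(B)$, meaning one whose underlying graded $N(B)^\natural$-module has the form $\bigoplus_n N(B)[n]^{\beta_n^B(N)}$ and whose differential satisfies $\partial(F) \subseteq (\fm_{B_0} + N(B)_{>0})\, F$; minimality forces the number of cells in homological degree $n$ to equal $\beta_n^B(N)$.

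Then $N(M) \otimes_{N(B)} F$ represents $M \otimes_B^{\bL} N$, and the semi-free cell filtration of $F$ induces a filtration on this tensor product, viewed as a dg $N(A)$-module by restriction along $N(A) \to N(B)$, whose successive quotients are copies of $N(M)[n]^{\beta_n^B(N)}$. Applying $- \otimes_{N(A)}^{\bL} k_A$ and analyzing the resulting spectral sequence of a filtered complex yields the Betti-number bound
\[\beta_n^A(M \otimes_B^{\bL} N) \leq \sum_{i+j=n} \beta_j^B(N) \cdot \beta_i^A(M),\]
which is precisely the coefficient-wise Poincar\'e series inequality above.

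The main obstacle is keeping track of the two gradings (homological degree and semi-free degree) on $N(M) \otimes_{N(B)} F$ and verifying that the cell filtration produces a convergent spectral sequence whose $E_1$-page has the correct form after tensoring with $k_A$. Both the construction of the minimal semi-free resolution with prescribed cell structure and the spectral sequence bookkeeping are standard tools in dg algebra, and this is precisely what the adaptation of \cite[Proposition~3.3]{AHIY12} to the simplicial/dg setting requires.
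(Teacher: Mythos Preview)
Your proposal is correct and reaches the same Poincar\'e series inequality $P^A_{M\otimes_B^{\bL}N}(t)\preceq P^A_M(t)\,P^B_N(t)$ that the paper establishes, but by a slightly different mechanism. The paper stays in the simplicial setting and invokes Quillen's spectral sequence \cite[Theorem~II.6.6]{Qui67}
\[E^2_{p,q}=\pi_p\bigl(N\otimes_B^{\bL}\pi_q(M\otimes_A^{\bL}k_A)\bigr)\Longrightarrow\pi_{p+q}\bigl((M\otimes_B^{\bL}N)\otimes_A^{\bL}k_A\bigr),\]
then uses finiteness of $A\to B$ to argue that each $\pi_q(M\otimes_A^{\bL}k_A)$ is annihilated by $\fm_B$, so that the $E^2$-page factors as $\pi_p(N\otimes_B^{\bL}k_B)\otimes_{k_A}\pi_q(M\otimes_A^{\bL}k_A)$. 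Your route---passing to the dg side via Lemma~\ref{dg}, choosing a minimal semi-free resolution of $N(N)$ over $N(B)$, and filtering by cell degree---produces the same inequality directly from the filtration quotients $N(M)[n]^{\beta_n^B(N)}$, and in fact uses the finiteness hypothesis only to make $\curv_A(M)$ well-defined rather than to identify an $E^2$-page. Both arguments are standard; yours is marginally more hands-on but sidesteps the step where the paper has to explain why $\fm_B$ kills $\pi_q(M\otimes_A^{\bL}k_A)$.
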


\begin{proof}\pushQED{\qed}
    By Theorem II.6.6 of \cite{Qui67}, we have a spectral sequence
    \[E^2_{p,q}\colon =\pi_p(N\otimes_B^\bL \pi_q(M\otimes_A^\bL k_A))\implies\pi_{p+q}((M\otimes_B^\bL N)\otimes_A^\bL k_A).\]
    Because $A\to B$ is finite, $\fm_B\pi_q(M\otimes_A^\bL k_A)\otimes_Bk_B=0$ and so
    \[\pi_p(N\otimes_B^\bL \pi_q(M\otimes_A^\bL k_A))\cong\pi_p(N\otimes_B^\bL k_B)\otimes_{k_A}\pi_q(M\otimes_A^\bL k_A)).\]
    From this, we get the following coefficientwise inequality
    \[P^A_{M\otimes_B^\bL N}(t)\preceq P^A_M(t)P^B_N(t)\]
    and thus that
    \[\curv_A(M\otimes_B^\bL N)\leq\max\{\curv_A(M),\curv_B(N)\}. \qedhere\]
\end{proof}

This next lemma is reminiscent of \cite[Theorem~9.3.2]{AIM06} which discusses how curvature changes along complete intersection maps.

\begin{lemma}\label{extend}
    Let $(A,k_A)$ and $(B,k_B)$ be local simplicial rings and $M\in\fmod(B)$. Let $A\to B$ be a finite map of local simplicial rings. Then
    \begin{align*}\curv_A(M)&\leq\max\{\curv_A(B),\curv_B(M)\}\\
    \curv_B(M)&\leq\max\{\curv_A(M),\curv_{\bar{B}}(k_B)\}
    \end{align*}
    where $\bar{B}:=B\otimes_A^\bL k_A$.
\end{lemma}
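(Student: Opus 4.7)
The plan is to adapt the spectral-sequence argument from the proof of Lemma \ref{comp}: in each case, use associativity of the derived tensor product to re-express the relevant module as a two-step derived tensor, then apply the Quillen spectral sequence \cite[Theorem~II.6.6]{Qui67}, and finally exploit finiteness of $A\to B$ to bound the $E^2$-page via a composition-series argument over the artinian local ring $\pi_0(\bar B)=B/\fm_AB$ (whose residue field is $k_B$).

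For the first inequality, associativity gives $M\otimes_A^\bL k_A\simeq M\otimes_B^\bL\bar B$, and I would invoke the Quillen spectral sequence in the form
\[E^2_{p,q}=\pi_p(M\otimes_B^\bL\pi_q(\bar B))\Longrightarrow\pi_{p+q}(M\otimes_A^\bL k_A).\]
Since $A\to B$ is finite, $B/\fm_AB$ is an artinian local ring and each $\pi_q(\bar B)$ is a finite module over it, hence admits a composition series of length $\ell_q$ (with $\ell_q[k_B{:}k_A]=\beta_q^A(B)$) whose successive quotients are copies of $k_B$. Running the long exact sequence of $\pi_\ast(M\otimes_B^\bL-)$ along this filtration gives
\[\rank_{k_A}E^2_{p,q}\leq\beta_q^A(B)\cdot\beta_p^B(M),\]
and summing diagonals of $E^\infty$ produces the coefficient-wise inequality $P_M^A(t)\preceq P_B^A(t)\cdot P_M^B(t)$, whence the first curvature bound.

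For the second inequality, associativity gives $M\otimes_B^\bL k_B\simeq(M\otimes_A^\bL k_A)\otimes_{\bar B}^\bL k_B$, and the analogous Quillen spectral sequence reads
\[E^2_{p,q}=\pi_p(\pi_q(M\otimes_A^\bL k_A)\otimes_{\bar B}^\bL k_B)\Longrightarrow\pi_{p+q}(M\otimes_B^\bL k_B).\]
Here $\pi_q(M\otimes_A^\bL k_A)$ is a finite $B/\fm_AB$-module (using finiteness of $M$ over $B$), and the same composition-series argument—now using the long exact sequence of $\pi_\ast(-\otimes_{\bar B}^\bL k_B)$—bounds $\rank_{k_B}E^2_{p,q}$ by a constant multiple of $\beta_q^A(M)\cdot\beta_p^{\bar B}(k_B)$, giving $P_M^B(t)\preceq C\cdot P_M^A(t)\cdot P_{k_B}^{\bar B}(t)$ and the claimed bound.

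The main obstacle is the composition-series step: I need to verify that $\pi_q(\bar B)$ and $\pi_q(M\otimes_A^\bL k_A)$ really are finitely generated over $B/\fm_AB$ (which follows from finiteness of $A\to B$ and of $M$ over $B$), and then carry out the long-exact-sequence bookkeeping carefully enough to produce a genuine coefficient-wise Poincar\'e-series inequality. Keeping track of the multiplicative factor $[k_B{:}k_A]$ along the way is harmless for curvature, but must be handled cleanly.
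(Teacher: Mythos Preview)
Your proposal is correct and, for the second inequality, essentially matches the paper: both use the associativity $M\otimes_B^\bL k_B\simeq (M\otimes_A^\bL k_A)\otimes_{\bar B}^\bL k_B$ and the Quillen spectral sequence to obtain $P_M^B(t)\preceq P_{k_B}^{\bar B}(t)\,P_M^A(t)$. Your composition-series step is in fact a bit more careful than the paper's phrasing: the paper asserts that $\pi_q(k_A\otimes_A^\bL M)$ is a ``finite $k_B$-vector space,'' whereas really it is only a finite-length module over the artinian local ring $\pi_0(\bar B)$; your filtration argument is exactly what makes that step rigorous, and the extraneous constant $[k_B{:}k_A]^{-1}$ it introduces only helps.

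For the first inequality, though, you are working harder than necessary. The paper simply writes $M\simeq B\otimes_B^\bL M$ and invokes Lemma~\ref{comp} directly to get $\curv_A(M)\leq\max\{\curv_A(B),\curv_B(M)\}$. Your spectral-sequence argument with $E^2_{p,q}=\pi_p(M\otimes_B^\bL\pi_q(\bar B))$ is valid and yields the same Poincar\'e-series inequality $P_M^A(t)\preceq P_B^A(t)\,P_M^B(t)$, but it is essentially a re-derivation of Lemma~\ref{comp} in this special case. Since you have Lemma~\ref{comp} already in hand, the one-line deduction is preferable.
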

\begin{proof}\pushQED{\qed}
    The first inequality follows from Lemma \ref{comp} as
    \[\curv_A(M)=\curv_A(B\otimes_B^\bL M)\leq\max\{\curv_A(B),\curv_B(M)\}\]
    We now work to prove the second inequality. We have
    \[k_B\otimes_{\bar{B}}^\bL(k_A\otimes_A^\bL M)\cong k_B\otimes_{\bar{B}}^\bL(k_A\otimes_A^\bL B)\otimes_B^\bL M\cong k_B\otimes_B^\bL M\]
    By \cite[Theorem~II.6.6]{Qui67}, we have a spectral sequence
    \[E^2_{p,q}\colon =\pi_p(k_B\otimes_{\bar{B}}^\bL \pi_q(k_A\otimes_A^\bL M))\implies\pi_{p+q}(k_B\otimes_B^\bL M)\]
    From this we get the inequality
    \[\beta_n^B(M)\leq \sum_{i=0}^n\beta_{n-i}^{\bar{B}}(\pi_i(k_A\otimes_A^\bL M))\leq\sum_{i=0}^n\beta_{n-i}^{\bar{B}}(k_B)\beta_i^A(M)\]
    Note that the second inequality holds because $\pi_i(k_A\otimes_A^\bL M)$ is a finite $k_B$-vector space. Then
    \[P^B_M(t)\preceq P^{\bar{B}}_k(t)P^A_M(t)\]
    and we get
    \[\curv_B(M)\leq\max\{\curv_A(M),\curv_{\bar{B}}(k_B)\}.\qedhere\]
\end{proof}

We will generally use this lemma with more specific hypotheses on $\varphi$ and we record these use-cases in the following corollaries.

\begin{definition}[\cite{Toen/Vezzosi:2008} Lemma 2.2.2.2]
    A map $\varphi\colon A\to B$ of simplicial rings is flat if $\pi_0(B)$ is a flat $\pi_0(A)$-module and the natural map $\pi(A)\otimes_{\pi_0(A)}\pi_0(B)\to\pi(B)$ is an isomorphism.
\end{definition}

\begin{cor}\label{flat}
    Let $(A,k_A)$ and $(B,k_B)$ be local simplicial rings and let $M\in\fmod(B)$. If $\varphi\colon A\to B$ is a finite flat map of local simplicial rings then
    \[\curv_A(M)\leq\curv_B(M)\leq\max\{\curv_A(M),\curv_{\bar{B}}(k_B)\}\]
    where $\bar{B}\colon= B\otimes_A^\bL k_A\cong\pi_0(B)\otimes_{\pi_0(A)}k_A$.
\end{cor}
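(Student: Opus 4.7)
The plan is to apply Lemma \ref{extend} twice, exploiting the flatness hypothesis only to force $\curv_A(B)=0$. Note that we implicitly inherit from Lemma \ref{extend} the assumption that $\varphi$ is finite; without it, $M$ need not lie in $\fmod(A)$ and the Betti numbers $\beta^A_i(M)$ and $\beta^A_i(B)$ are not defined.

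The second inequality is literally the second inequality of Lemma \ref{extend}, so there is nothing to prove. For the first, the first inequality of Lemma \ref{extend} gives
\[
\curv_A(M)\leq \max\{\curv_A(B),\,\curv_B(M)\},
\]
so the claim reduces to showing $\curv_A(B)=0$. Granting the identification $\bar B\cong \pi_0(B)\otimes_{\pi_0(A)}k_A$ asserted in the statement, this is immediate: the right-hand side is a discrete ring, so $\pi_i(\bar B)=0$ for every $i\geq 1$, hence $\beta_i^A(B)=0$ for $i\geq 1$; and since $\varphi$ is finite, $\pi_0(B)\otimes_{\pi_0(A)}k_A$ is a finite-dimensional $k_A$-vector space, so $\beta_0^A(B)<\infty$. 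Consequently $P_B^A(t)$ is a nonzero constant and $\curv_A(B)=0$, yielding $\curv_A(M)\leq \curv_B(M)$.

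The only piece worth elaborating is the identification $\bar B\simeq \pi_0(B)\otimes_{\pi_0(A)}k_A$. I would derive it from the convergent K\"unneth-type spectral sequence
\[
E^2_{p,q}=\Tor_p^{\pi_*(A)}(\pi_*(B),k_A)_q \Longrightarrow \pi_{p+q}(\bar B),
\]
combined with the isomorphism $\pi_*(B)\cong \pi_*(A)\otimes_{\pi_0(A)}\pi_0(B)$ supplied by the definition of a flat map. A standard change-of-rings computation then collapses the $E^2$-page to $\Tor_p^{\pi_0(A)}(\pi_0(B),k_A)$, and flatness of $\pi_0(B)$ over $\pi_0(A)$ makes this vanish for $p\geq 1$ and equal $\pi_0(B)\otimes_{\pi_0(A)}k_A$ for $p=0$. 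No real obstacle arises; the entire argument is a short bookkeeping step on top of Lemma \ref{extend}.
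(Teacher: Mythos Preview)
Your proof is correct and follows essentially the same route as the paper's: both arguments reduce to Lemma~\ref{extend} after observing that flatness forces $\curv_A(B)=0$ via the identification $\bar B\simeq\pi_0(B)\otimes_{\pi_0(A)}k_A$. The only difference is that the paper obtains this identification by citing \cite[Lemma~2.2.2.2]{Toen/Vezzosi:2008}, whereas you supply a self-contained K\"unneth/Eilenberg--Moore spectral-sequence computation; your remark that the finiteness hypothesis of Lemma~\ref{extend} is implicitly in force is also a useful clarification the paper leaves tacit.
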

\begin{proof}
    The discussion in the proof of \cite[Lemma~2.2.2.2]{Toen/Vezzosi:2008} implies $\bar{B}\cong \pi_0(B)\otimes_{\pi_0(A)}k_A$, which also implies $\curv_A(B)=0$. The result follows immediately from Lemma \ref{extend}.
\end{proof}

\begin{cor}\label{koszul}
    Let $(A,k)$ be a local simplicial ring and let $B$ be a Koszul complex on $x_1,\dots,x_r\in\fm$, the maximal ideal of $A_0$. Then
    \[\curv_A(M)\leq\curv_B(M)\leq\max\{\curv_A(M),1\}.\]
\end{cor}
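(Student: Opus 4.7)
The plan is to derive both inequalities directly from Lemma \ref{extend} by computing the two auxiliary curvatures on its right-hand sides, namely $\curv_A(B)$ and $\curv_{\bar B}(k_B)$, where $\bar B = B\otimes_A^\bL k_A$.

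First I would show $\curv_A(B) = 0$. By Lemma \ref{dg}, I can pass to the dg setting, where $N(B)$ is the classical Koszul dg $N(A)$-algebra $\Lambda_{N(A)}(e_1,\dots,e_r)$ with $d(e_i)=x_i$. As this is free of rank $2^r$ as a graded $N(A)$-module, the derived tensor product $N(B)\otimes^{\bL}_{N(A)}k_A$ is represented by $\Lambda_{k_A}(e_1,\dots,e_r)$ with zero differential, whose homology vanishes in degrees greater than $r$. Hence $\beta_n^A(B)=0$ for $n>r$, so $\curv_A(B)=0$, and the first inequality of Lemma \ref{extend} collapses to $\curv_A(M)\leq\curv_B(M)$.

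Next I would identify $\bar B$ and bound $\curv_{\bar B}(k_B)$. Since each $x_i\in\fm$ has zero image in $k_A$, the same reduction identifies $\bar B$ with the exterior algebra $\Lambda_{k_A}(e_1,\dots,e_r)$. Moreover $B_0=A_0$ forces $k_B=k_A$. The minimal free resolution of $k_A$ over an exterior algebra on $r$ generators is the divided power algebra dual to $\Lambda_{k_A}(e_1,\dots,e_r)$, with Poincar\'e series $(1-t)^{-r}$ and Betti numbers $\binom{n+r-1}{r-1}$ growing polynomially in $n$. Therefore $\curv_{\bar B}(k_B)\leq 1$, and the second inequality of Lemma \ref{extend} yields $\curv_B(M)\leq\max\{\curv_A(M),1\}$.

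No real obstacle is anticipated: the only bookkeeping concerns the Dold-Kan passage and the identification of $\bar B$ with the classical exterior algebra, both of which are routine once Lemma \ref{dg} is in hand.
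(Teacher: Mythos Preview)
Your proposal is correct and follows essentially the same route as the paper: both arguments feed the two computations $\curv_A(B)=0$ and $\curv_{\bar B}(k_B)\le 1$ into Lemma~\ref{extend}, identifying $\bar B$ with an exterior algebra and invoking the divided power resolution. The only cosmetic difference is that the paper first reduces to the case $r=1$ and then computes $\curv_{\bar B}(k_B)=1$ for a single-variable exterior algebra, whereas you treat all $r$ generators at once and obtain the polynomial Betti numbers $\binom{n+r-1}{r-1}$ directly; neither approach gains anything substantive over the other.
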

\begin{proof}
    It suffices to consider the case where $B=K[A;x]$, the Koszul complex on a single element. By construction, $\curv_A(B)=0$. We claim $\curv_{\bar{B}}(k_B)=1$. By Lemma \ref{dg}, we can compute this in the DG setting, where $\bar{B}$ is an exterior algebra over $k$ on $x$ in degree 1. Then by \cite[Proposition~6.1.7]{Avramov:1996b} or, more directly, \cite[Lemma~1.5]{Avramov/Iyengar:2018}, the divided power algebra $\bar{B}\langle X~|~\partial(X)=x\rangle$, is a resolution of $k$ over $\bar{B}$ and so $k\otimes_{\bar{B}}^\bL k\cong k\langle X\rangle$ which has curvature 1.
\end{proof}

\subsection*{Simplicial rings in positive characteristic}
In this section, we collect some results about simplicial rings in positive characteristic that will help contextualize our later results on homological dimension. We first prove the following result which is simply a re-framing of \cite[Theorem~2.1]{BLIMP} for simplicial rings. The proof is exactly the same aside from checking that various statements remain true when we replace a classical ring $R$ with a simplicial ring $A$. We include it here for the convenience of the reader.

\begin{prop}
\label{blimp}
Let $(A,k)$ be a local simplicial ring of characteristic $p>0$. Then there is a natural number $c$ such that for any $A$-module $M$ and any $p^e>c$ there is an isomorphism
\[
F^e_*K^M\simeq \pi(F^e_*K^M)
\]
in $\Ho(A)$. In particular, $k$ is a summand of $F^e_*K^M$ when $\pi(K^M)\neq0$.
\end{prop}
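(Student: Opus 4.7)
The plan is to translate the proof of \cite[Theorem~2.1]{BLIMP} into the simplicial setting, as the paragraph preceding the proposition indicates; the core task is to check each step of the classical argument survives after passing through the Dold--Kan correspondence.

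First I would identify $\pi(K^M)$ as a graded $k$-vector space. By the remark in the Koszul complex subsection, $K^M\cong M\otimes_{A_0}K[A_0;x_1,\dots,x_n]$ where $x_1,\dots,x_n$ is a minimal generating set of $\fm_{A_0}$. Applying the normalization functor $N$, the complex $N(K^M)$ has differentials given up to signs by multiplication by the $x_i$'s, so each $\pi_i(K^M)=H_i(N(K^M))$ is annihilated by $\fm_{A_0}$ and is therefore a $k$-vector space.

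Next I would build $F^e_*K^M$ using the Postnikov tower of Remark~\ref{post}. Each Postnikov stage fits in a triangle involving $F^e_*\pi_n(K^M)[n]$, classified by a $k$-invariant in a suitable obstruction group. The key observation, shared with the classical argument, is that these obstructions are controlled entirely by the Koszul complex $K^{A_0}$: tensoring with $M$ does not introduce new obstructions beyond extending scalars, so any bound $c$ depends only on $A_0$ and the chosen minimal generating set of $\fm_{A_0}$, not on $M$.

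The main obstacle is showing that $F^e_*$ annihilates each Postnikov $k$-invariant for $p^e$ sufficiently large. The underlying mechanism is that the obstructions live in torsion modules on which Frobenius acts by a $p^e$-power map, which annihilates any fixed element once $e$ is large enough; the classical argument of \cite{BLIMP} provides the quantitative bound. Translating this to the simplicial setting, one must check that the Dold--Kan correspondence intertwines $F^e_*$ with the appropriate degreewise Frobenius on dg modules, and that the spectral sequences used in the classical proof carry over (as in Lemmas~\ref{comp} and \ref{extend}, which invoke \cite[Theorem~II.6.6]{Qui67}). Once the formality $F^e_*K^M\simeq\pi(F^e_*K^M)$ is established, the ``in particular'' statement is immediate: if $\pi(K^M)\neq 0$, some $\pi_n(K^M)$ is a nonzero $k$-vector space, so $\pi_n(F^e_*K^M)$ has $k$ as a direct summand, and hence so does $F^e_*K^M$.
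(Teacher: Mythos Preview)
Your outline gestures at a Postnikov/$k$-invariant approach, but the core step---why $F^e_*$ kills the obstructions for $p^e$ large---is not carried out, and the heuristic you offer (``obstructions live in torsion modules on which Frobenius acts by a $p^e$-power map'') is not what actually drives the argument. The $k$-invariants of $F^e_*K^M$ as an $A$-module are not simply $F^e_*$ applied to the $k$-invariants of $K^M$, and there is no evident ``$p^e$-power on torsion'' mechanism here: the homotopy groups of $K^M$ are $k$-vector spaces, not torsion $A_0$-modules that Frobenius might nilpotently annihilate. Without a concrete identification of the obstruction groups and a precise description of how Frobenius moves classes inside them, this remains a strategy rather than a proof.

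The paper's argument is quite different and more explicit. After completing $A$, one takes a Cohen presentation $B\to A_0$ with $B$ regular, builds free simplicial $B$-algebra resolutions $B\{X\}\xrightarrow{\sim}A$ and $B\{Y\}\xrightarrow{\sim}k$, and identifies $K^A$ with $k\{X\}$. The constant $c$ is $\sup\{i:\pi_i(K^A)\neq 0\}$, and the key input is Quillen's connectivity theorem \cite[Theorem~6.12]{Qui70}: for $J=\ker(k\{X\}\to k)$ one has $\pi_i(J^{n+1})=0$ for $i\le n$, so $k\{X\}\to k\{X\}/J^{c+1}$ is a quasi-isomorphism in the relevant range. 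The decisive observation is purely algebraic: the Frobenius $F^e$ on $B\{X\}$ sends the augmentation ideal $I$ into $I^{p^e}$, hence into $J^{c+1}$ once $p^e>c$. This forces the composite $B\{X\}\xrightarrow{F^e}B\{X\}\to k\{X\}\to C$ (where $C$ is the Postnikov truncation of $k\{X\}/J^{c+1}$) to factor through $k$, giving a commutative diagram that exhibits $F^e_*(K^A\text{-module})$ as coming from a $k$-module, hence formal. Remark~\ref{post} is used only once, to truncate $k\{X\}/J^{c+1}$; the heart of the proof is the interaction of Frobenius with the $J$-adic filtration and Quillen's connectivity, neither of which appears in your sketch.
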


\begin{proof}
    Let $\fm=\ker(A\to k)$. We can complete $A$ at $\fm$ by computing $\lim_n A/\fm^n$, noting that limits are computed degree-wise. Let $\hat{A}$ be the completion and note that because $\fm\pi(K^M)=0$, the natural map
    \[K^M\to \hat{A}\otimes_A K^M\simeq K^{\hat{A}\otimes_A M}\]
    is an isomorphism and so we can assume $A$ and thus $A_0$ are complete. Let $B\to A_0$ be a minimal Cohen presentation of $A_0$. Let $\rho\colon B\{X\}\xrightarrow{\sim} A$ and $B\{Y\}\xrightarrow{\sim} k$ be simplicial free resolutions of $A$ and $k$ respectively as $B$-algebras. Then
    \[B\{X,Y\}\colon =B\{X\}\otimes_BB\{Y\}\]
    is a simplicial free resolution of $K^A$ over $B\{X\}$. We also get that $B\{X,Y\}\xrightarrow{\simeq}k\{X\}$ by applying $B\{X\}\otimes_B-$ to the quasi-isomorphism $B\{Y\}\xrightarrow{\simeq} k$.

Consider $J=\ker(k\{X\}\to k)$. $k\{X\}$ is a free simplicial $k$-algebra, so \cite[Theorem~6.12]{Qui70} says that for every integer $n\geq0$ $\pi_i(J^{n+1})=0$ for $i\leq n$. In particular, $k\{X\}\to k\{X\}/J^{n+1}$ will be bijective on homology in degrees $\leq n$. Because $k\{X\}\simeq K^A$ we know that for $c>\sup\left\{i\colon\pi_i\left(K^A\right)\neq0\right\}$
    \[\pi_i\left(k\{X\}\right)\cong \pi_i\left(K^A\right)=0 \text{   for } i>c\]
    By Remark \ref{post} we have a map of simplicial rings $k\{X\}/J^{c+1}\to C$ that is bijective on homology in degree $\leq c$ and with $\pi_i(C)=0$ for $i>c$. Then the composition
    \[k\{X\}\longrightarrow k\{X\}/J^{c+1}\longrightarrow C\]
    is a quasi-isomorphism by construction. Let $I\colon =\ker\left(\varepsilon\colon B\{X\}\to k\right)$ and let $e$ be such that $p^e\geq c+1$. Then the composition
    \[B\{X\}\xrightarrow{F^e} B\{X\}\longrightarrow B\{X,Y\}\longrightarrow k\{X\}\]
    takes $I$ into $J^{c+1}$, and so the map $B\{X\}\to C$ factors through $\varepsilon$, yielding the following commutative diagram
    \[\begin{tikzcd}
        A\arrow[r,"F^e"]&A\arrow[r]&K^A&\\
        B\{X\}\arrow[r,"F^e"]\arrow[u,"\simeq","\rho"']\arrow[d,"\varepsilon"]&B\{X\}\arrow[r]\arrow[u,"\simeq"]&B\{X,Y\}\arrow[r,"\simeq"]\arrow[u,"\simeq"]&k\{X\}\arrow[d,"\simeq"]\\
        k\arrow[rrr,"\Psi"]&&&C
    \end{tikzcd}\]
    Given a $C$-module $M$, the $A$-module $\rho^*\varepsilon_*\Psi_*(M)$ must be isomorphic to its homology after normalizing, as this is true for any $k$-module. Thus, by the above commutative diagram we get that any simplicial $K^A$-module $M'$, the simplicial $A$-module $F^e_*(M')$ is isomorphic to its homology after normalizing, and letting $M'$ be $K^M$ we prove the result. The final statement comes from the fact that if $\pi(K^M)\neq0$ then $\pi(F^e_*K^M)$ has $F^e_*k$, and thus $k$, as a summand.
\end{proof}

It's worth noting that the $c$ in the statement of Proposition \ref{blimp} is an invariant of $A$ and thus does not depend on the module $M$. We can use Proposition \ref{blimp} to extend, with similar proof, \cite[Theorem~5.1]{AHIY12}.

\begin{lemma}\label{eth}
    Let $(A,k)$ be a local simplicial ring and $M\in\fmod(A)$. Then $\curv(F^e_*M)=\curv(k)$ for all $e\geq1$.
\end{lemma}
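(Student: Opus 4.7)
My plan is to establish the equality $\curv(F^e_*M) = \curv(k)$ by proving both inequalities, mirroring the argument of \cite[Theorem~5.1]{AHIY12} with Proposition \ref{blimp} serving as the essential Koszul--Frobenius splitting input.

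For the \emph{upper bound} $\curv_A(F^e_*M) \leq \curv_A(k)$, I would use that $F^e_*M$ is a finite $A$-module, which is immediate from the $F$-finiteness of $A$. Over a local simplicial ring, any finite module has curvature bounded by $\curv_A(k)$; via Lemma \ref{dg} this reduces to the classical coefficient-wise Serre-type estimate on Poincar\'e series in the dg setting.

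For the \emph{lower bound}, the plan is to show $k \in \thick_A(F^e_*M)$ inside $\Ho(N(A))$, so that Lemma \ref{thick} gives $\curv_A(k) \leq \curv_A(F^e_*M)$. The Koszul complex $K^A$ is built from $A$ by a finite sequence of cones along multiplication by a minimal generating sequence of $\fm$, so $K^A \in \thick_A(A)$, and tensoring with $M$ yields $K^M = M \otimes_A K^A \in \thick_A(M)$. Since $F^e_*$ is an exact restriction-of-scalars functor, this gives $F^e_*K^M \in \thick_A(F^e_*M)$. Proposition \ref{blimp} is then used to analyze $F^e_*K^M$: for $p^e$ exceeding the constant $c$ attached to $A$, it is quasi-isomorphic in $\Ho(A)$ to its homotopy $\pi(F^e_*K^M)$, which is annihilated by $\fm$ and hence (by $F$-finiteness) splits as a finite direct sum of copies of $k$. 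Provided $\pi(K^M) \neq 0$---which holds whenever $M \not\simeq 0$, the only nonvacuous case---this exhibits $k$ as a summand of $F^e_*K^M$ and so places $k$ in $\thick_A(F^e_*M)$, giving the lower bound whenever $p^e > c$.

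The main obstacle is extending the lower bound to all $e \geq 1$, in particular to those iterates with $p^e \leq c$. The natural move is to apply the Koszul--blimp argument not to $M$ but to $F^e_*M$, itself a finite $A$-module, with an auxiliary Frobenius iterate $f$ satisfying $p^f > c$. This gives $k$ as a summand of $F^f_*K^{F^e_*M}$ in $\Ho(A)$; however, since $K^{F^e_*M} \in \thick_A(F^e_*M)$ and $F^f_*$ is exact, a naive application only places this summand in $\thick_A(F^f_*(F^e_*M)) = \thick_A(F^{e+f}_*M)$, yielding $k \in \thick_A(F^{e+f}_*M)$ rather than $\thick_A(F^e_*M)$. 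Bridging this gap---i.e.\ showing that the splitting furnished by Proposition \ref{blimp} can be tracked back into $\thick_A(F^e_*M)$ itself---will require a finer analysis of how the quasi-isomorphism of Proposition \ref{blimp} interacts with the Koszul structure on $F^e_*M$. I expect this descent step to be the hardest part of the proof, but once it is in place the desired equality $\curv_A(F^e_*M) = \curv_A(k)$ follows for all $e \geq 1$.
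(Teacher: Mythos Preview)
Your treatment of the upper bound and of the lower bound for $p^e>c$ is essentially the paper's argument. The genuine gap is exactly where you flag it: extending the lower bound to all $e\ge 1$. Your proposed fix---hoping that the splitting from Proposition~\ref{blimp} applied to $F^e_*M$ can be ``tracked back'' into $\thick_A(F^e_*M)$---is not an argument, and there is no evident mechanism for it to work. What you would need is $F^{e+f}_*M\in\thick_A(F^e_*M)$, and there is no reason to expect restriction along further Frobenius iterates to stay in the thick subcategory generated by $F^e_*M$.

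The paper resolves this by abandoning the attempt to prove $k\in\thick_A(F^e_*M)$ for small $e$ and instead passing through curvature inequalities directly. One sets $M^{(1)}=M$ and $M^{(n+1)}=M^{(n)}\otimes_A^{\bL}F_*M$, and proves by induction on $n$, using Lemma~\ref{comp} applied to the maps $A\to F^n_*A\to F^{n+1}_*A$, that
\[
\curv_A\bigl(F^n_*M^{(n)}\bigr)\le \curv_A(F_*M)
\]
for every $n$. Taking $n$ large enough that $p^n>c$, the module $F^n_*M^{(n)}$ is $F^n_*$ of a finite $A$-module, so the large-$e$ case (which you already have) gives $\curv_A(F^n_*M^{(n)})=\curv_A(k)$. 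Combining,
\[
\curv_A(k)=\curv_A\bigl(F^n_*M^{(n)}\bigr)\le \curv_A(F_*M)\le \curv_A(k),
\]
so $\curv_A(F_*M)=\curv_A(k)$ for every finite $M$; replacing $M$ by $F^{e-1}_*M$ gives the statement for all $e\ge 1$. The missing idea, in short, is the iterated-tensor construction $M^{(n)}$ together with Lemma~\ref{comp}, not a refinement of the Koszul splitting.
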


\begin{proof} \pushQED{\qed}
    First, note that for $e\gg0$ and any $M$ a finitely generated $A$-module
    \[\curv(k)\leq \curv(F^e_*K^M)=\curv(F^e_*M)\leq\curv(k).\]
    The first inequality follows from Proposition \ref{blimp} and Lemma \ref{thick}. The second equality follows because \cite[Proposition~2.2]{Avr99} implies that $N(K^M)\cong N(K^A)\otimes_{N(A)}N(M)$. Because $N(K^A)$ is a perfect complex, $\curv_{N(A)}(N(K^M))=\curv_{N(A)}(N(M))$ and so the equality follows from Lemma \ref{dg}. The third inequality holds because $\curv(N)\leq\curv(k)$ for any finite $A$-module $N$. Thus we get that $\curv(F^e_*M)=\curv(k)$ for $e\gg0$.

    We now show that this holds for $e\geq1$. Define
    \begin{align*}
       M^{(1)}&\colon =M\\
       M^{(n+1)}&\colon =M^{(n)}\otimes_R^\bL F_*M
    \end{align*}
    Tautologically, $\curv(F_*M^{(1)})\leq\curv(F_*M)$, so assume that $\curv_A(F^n_*M^{(n)})\leq\curv_A(F_*M)$. Applying Lemma \ref{comp} to the maps $A\to F^n_*A\to F^{n+1}_*A$, we get
    \begin{align*}
    \curv_A(F^n_*M^{(n+1)})&=\curv_A\left(F^n_*M^{(n)}\otimes_{F^n_*R}^\bL F^{n+1}_*M\right)\\
       &\leq\max\left\{\curv_A(F^n_*M^{(n)}),\curv_{F^n_*A}(F^{n+1}_*M)\right\}\\
       &=\max\left\{\curv_A(F^n_*M^{(n)}),\curv_{A}(F_*M)\right\}\\
       &\leq\curv_A(F_*M).
    \end{align*}
    Then we get that for $e\gg0$
    \[\curv(k)= \curv(F^e_*M^{(e)})\leq \curv(F_*M)\leq\curv(k). \qedhere\]    
\end{proof}

From this we can also derive a simplicial version of Theorem 1.1 of \cite{AHIY12}, itself a generalization of \cite[Theorem~2.1]{Kun69} and \cite[Theorem~2]{Rod88}.

\begin{prop}\label{discrete}
    Let $(A,k)$ be a local simplicial ring and $M\in\fmod(A)$. If $\curv_A(F^e_*M)=0$ for some (equiv. for all) $e\geq1$, then $A$ has finite global dimension and thus $A\simeq\pi_0(A)$ is a regular ring.
\end{prop}
\begin{proof}
    By Lemma \ref{eth}, $\curv_A(F^e_*M)=\curv_A(k)$ for all $e\geq1$ and so $\curv_A(k)=0$. Then $\curv_{N(A)}(k)=0$ by Lemma \ref{dg} so $k\in\thick(N(A))$. Then, by \cite[Theorem~A]{Jor10}, $A\simeq\pi_0(A)$ and so $\pi_0(A)$ is regular.
\end{proof}

\section{Homological dimension}
This section contains the main results connecting the homological properties of the relative Frobenius of a map $\varphi$ of local rings to the homological properties of the (derived) fibers of $\varphi$. We focus on regularity and the complete intersection property, as these are characterized in terms of Betti number growth.

Throughout, all rings will be commutative, noetherian of characteristic $p>0$. We will also assume all rings are $F$-finite. Fix a local homomorphism $\varphi\colon(R,\fm_R,k_R)\to(S,\fm_S,k_S)$ of finite flat dimension. In this setting, \cite[Corollary~3.5(a)]{Marley-Webb:2016} gives that $S\otimes_R^\bL F_*R\cong S\otimes_RF_*R$ when $R\to S$ is finite. Our goal is to understand how the homological properties of the relative Frobenius relate to those of the (derived) fibers of $\varphi$. Recall the following diagram
\[\begin{tikzcd}
R\arrow[r,"\varphi"]\arrow[d,"F"]&S\arrow[d]\arrow[rd,"F"]& \\
F_*R\arrow[r]&A\colon=S\otimes_R F_*R\arrow[r,swap,"F_{S/R}"]& F_*S
\end{tikzcd}
\]
where $F_{S/R}$ is the relative Frobenius of $\varphi$ induced by the universal property of the tensor product. The main idea that we exploit to investigate this relationship is that the Frobenius on the derived closed fiber, $F\colon S\otimes_R^\bL k_R\to F_*\left(S\otimes_R^\bL k_R\right)$, can be factored as follows:
\[
\begin{tikzcd}
&S\otimes_RF_*R\arrow[r,"F_{S/R}"]\arrow[d]&F_*S\arrow[d]\\
S\otimes_R^\bL k_R\arrow[r,"(1)"]& S\otimes_R^\bL F_*k_R\arrow[r,"(2)"] &F_*(S\otimes_R^\bL k_R)
\end{tikzcd}
\]
That is, the Frobenius on the derived closed fiber is the base change of the relative Frobenius up to a field extension. More generally, we can consider the perfect fibers $S\otimes_R^\bL k'$ for $k'$ a finite, purely inseparable extension of $k_R$ and use that $k'\subseteq F^e_*k_R$ for $e\gg0$ to identify an analogous factorization of the Frobenius on $S\otimes_R^\bL k'$. We are  ready to state our main result.

\begin{thm}
\label{main}
Let $\varphi\colon R\to S$ be a map of $F$-finite local rings of positive characteristic and let $k_R$ be the residue field of $R$. If $\varphi$ has finite flat dimension, then
\[
\curv_{\bar{S'}}(F_*\bar{S'})\leq\curv_A(F_*S)\leq\max\{\curv_{\bar{S'}}(F_*\bar{S'}),1\}
\]
where $\bar{S'}:=S\otimes_R^\bL k'$ for $k'$ any finite, purely inseparable extension of $k_R$.
\end{thm}

\begin{proof}\pushQED{\qed}
Let $k'$ be a finite, purely inseparable field extension of $k_R$ and let $e$ be such that $k'\subseteq F^e_*k_R$. Let $A^e\colon=S\otimes_RF^e_*R$. Note that by \cite[Exp.~XV~Propostion~2a]{SGA5}, the relative Frobenius induces a homeomorphism on spectra so $A^e$ is local. Let let $k_{A^e}$ be the residue field of $A^e$. We fix the following notation:
\begin{align*}
    \bar{S}&:=S\otimes_R^\bL k_R\\
    \bar{S'}&:=\bar{S}\otimes_{k_R}k'\\
    \bar{A^e}&:=A^e\otimes_{F^e_*R}^\bL F^e_*k_R\simeq S\otimes_R^\bL F^e_*k_R
\end{align*}
Our goal is to compare the Betti numbers of the relative Frobenius to those of the Frobenius on the perfect fibers of $\varphi\colon R\to S$, so we first look at the Betti numbers of $F^e_{S/R}\colon A^e\to F^e_*S$. Note that
\begin{align*}
F^e_*S\otimes_{A^e}^\bL k_{A^e}&\simeq F^e_*S\otimes_{A^e}^\bL\left(\bar{A^e}\otimes_{\bar{A^e}}^\bL k_{A^e}\right) \\
&\simeq \left(F^e_*S\otimes_{F^e_*R}^\bL F^e_*k_R\right)\otimes_{\bar{A^e}}^\bL k_{A^e}\\
&\simeq F^e_*\bar{S}\otimes_{\bar{A^e}}^\bL k_{A^e}.
\end{align*}
Thus
\[\beta_i^{A^e}(F^e_*S)=\beta_i^{\bar{A^e}}(F^e_*\bar{S})\]
and so
\[\curv_{A^e}(F^e_*S)=\curv_{\bar{A^e}}(F^e_*\bar{S}).\]

We now turn to the Betti numbers of the Frobenius on the perfect fibers. Notice that $F^e\colon S\otimes_R^\bL k'\to F^e_*(S\otimes_R^\bL k')$ factors as 
\[\begin{tikzcd}
S\otimes_R^\bL k'\arrow[r,"(1)"]& S\otimes_R^\bL F^e_*k_R\arrow[r,"(2)"] &F^e_*(S\otimes_R^\bL k_R)\arrow[r,"(3)"]&F^e_*(S\otimes_R^\bL k')
\end{tikzcd}\]
which in our notation is
\[\begin{tikzcd}
\bar{S'}\arrow[r,"(1)"]& \bar{A^e}\arrow[r,"(2)"] &F^e_*\bar{S}\arrow[r,"(3)"]&F^e_*\bar{S'}
\end{tikzcd}\]
Now we note that
\[F^e_*\bar{S'}\otimes_{\bar{A^e}}^\bL k_A\cong \left(F^e_*\bar{S}\otimes_{F^e_*k_R}F^e_*k'\right)\otimes^\bL_{\bar{A^e}}k_{A^e}\cong \left(F^e_*\bar{S}\otimes_{\bar{A^e}}^\bL k_A\right)\otimes_{F^e_*k_R}F^e_*k'\]
which gives us
\[\curv_{\bar{A^e}}(F^e_*\bar{S})=\curv_{\bar{A^e}}(F^e_*\bar{S'}).\]
Finally, consider $\bar{S'}\to\bar{A^e}$. This is flat, and because $\bar{A^e}\otimes_{\bar{S'}}^\bL k_{S'}\cong k_{A^e}\otimes_{k'}k_{S'}$, which is complete intersection, Lemma \ref{extend} tells us
\[\curv_{\bar{S'}}(F^e_*\bar{S'})\leq\curv_{\bar{A^e}}(F^e_*\bar{S'})\leq\max\{\curv_{\bar{S'}}(F^e_*\bar{S'}),1\}.\]
Combining this with the fact that $\curv_{A^e}(F^e_*S)=\curv_{\bar{A^e}}(F^e_*\bar{S})=\curv_{\bar{A^e}}(F^e_*\bar{S'})$ we get
\[\curv_{\bar{S'}}(F^e_*\bar{S'})\leq\curv_{A^e}(F^e_*S)\leq\max\{\curv_{\bar{S'}}(F^e_*\bar{S'}),1\}.\]
Invoking Lemma \ref{eth} we immediately get
\[\curv_{\bar{S'}}(F_*\bar{S'})\leq\curv_{A^e}(F^e_*S)\leq\max\{\curv_{\bar{S'}}(F_*\bar{S'}),1\}.\]
We would like to remove $e$ from the middle term, as right now $e$ depends on the extension $k'$ of $k_R$. This is immediate when $\curv_{A}(F_*S)\geq1$, so we only need to consider the cases where $\curv_{A}(F_*S)=0$. In this case, we claim $\curv_{A^e}(F^e_*S)=0$ for all $e\geq1$. We prove this by induction. It is tautological for $e=1$ so suppose it's true up to $e=n$. Note that $S\otimes_RF^{n+1}_*R\to F^{n+1}_*S$ factors as
\[
\begin{tikzcd}
(S\otimes_RF_*R)\otimes_{F_*R}F^n_*R\arrow[r]&F_*S\otimes_{F_*R}F^{n+1}_*R\arrow[r]&F^{n+1}_*S
\end{tikzcd}
\]
The first map is simply $F_{S/R}\otimes_{F_*R}F^n_*R$ and so has curvature zero. The second map is simply $F_*(F^n_{S/R})$ and so has curvature zero. Thus, by Lemma \ref{comp} $\curv_{A^{n+1}}(F^{n+1}_*S)=0$.
Thus 
\[
\curv_{\bar{S'}}(F_*\bar{S'})\leq\curv_{A}(F_*S)\leq\max\{\curv_{\bar{S'}}(F_*\bar{S'}),1\}.\qedhere
\]
\end{proof}

From this, we recover the results of Radu, Andr\'e and Dumitrescu. First, recall the definition of a regular homomorphism of rings.

\begin{definition}
A map $\varphi\colon R\to S$ of noetherian rings is \emph{regular} if it is flat and all the fibers are geometrically regular, meaning for every $\fp\in\Spec R$ and every finite purely inseparable field extension $k(\fp)\subseteq k'$, the ring $S\otimes_R k'$ is regular.
\end{definition}

\begin{cor}\label{flatfrob}
    Let $R$ and $S$ be $F$-finite local rings of positive characteristic. Let $\varphi\colon R\to S$ be a map of local rings of finite flat dimension. Then $F_{S/R}$ has finite flat dimension if and only if $\varphi$ is regular.
\end{cor}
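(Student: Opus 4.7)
The plan is to translate ``$F_{S/R}$ has finite flat dimension'' into the curvature condition $\curv_A(F_*S)=0$ and then invoke Theorem \ref{main} to transfer that vanishing to the derived fibers $\bar{S'}=S\otimes_R^\bL k'$. Note that $A$ is Noetherian local, being a localization of the finite $S$-algebra $S\otimes_R F_*R$, and $F_*S$ is a finite $A$-module by $F$-finiteness; hence $F_{S/R}$ has finite flat dimension if and only if $\beta^A_n(F_*S)=0$ for $n\gg 0$, which is equivalent to $\curv_A(F_*S)=0$. The same remark shows that $\curv_{\bar{S'}}(F_*\bar{S'})=0$ is precisely the input needed to apply Proposition \ref{discrete} on the fiber side.

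For the forward direction, assume $F_{S/R}$ has finite flat dimension, so $\curv_A(F_*S)=0$. Theorem \ref{main} yields $\curv_{\bar{S'}}(F_*\bar{S'})=0$ for every finite, purely inseparable extension $k'$ of $k_R$, and Proposition \ref{discrete} then forces $\bar{S'}\simeq \pi_0(\bar{S'})$ with $\pi_0(\bar{S'})=S\otimes_R k'$ regular. Specializing to $k'=k_R$, the discreteness of $S\otimes_R^\bL k_R$ gives $\Tor^R_i(S,k_R)=0$ for $i>0$, and combining this with the hypothesis $\fd_R(S)<\infty$ forces $\varphi$ to be flat. Letting $k'$ range over all finite, purely inseparable extensions of $k_R$ then shows that the closed fiber of $\varphi$ is geometrically regular. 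To extend the conclusion to every fiber, fix a prime $\fq$ of $S$ lying over $\fp \subseteq R$ and localize: $\varphi_\fq\colon R_\fp\to S_\fq$ is a local map of $F$-finite rings of finite flat dimension, and since Frobenius commutes with localization, $F_{S_\fq/R_\fp}$ is a localization of $F_{S/R}$ and so still has finite flat dimension. Re-running the argument on $\varphi_\fq$ shows $(S\otimes_R k(\fp))_\fq$ is geometrically regular; ranging over all $\fq$ above $\fp$ yields geometric regularity of $S\otimes_R k(\fp)$, so $\varphi$ is regular.

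The converse is essentially immediate: a regular map is flat by definition, and the Radu--Andr\'e theorem cited in the introduction then gives that $F_{S/R}$ is flat, hence in particular of finite flat dimension. The main obstacle is the simultaneous extraction, in the forward direction, of the flatness of $\varphi$ and of the geometric regularity of its fibers from a single curvature vanishing; both emerge from the discreteness plus $\pi_0$-regularity supplied by Proposition \ref{discrete}, but only after the argument is applied uniformly across all purely inseparable extensions of the residue fields and propagated by localization so that the fiber condition is verified at every prime of $R$.
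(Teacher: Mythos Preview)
Your proof is correct and follows essentially the same route as the paper's: both invoke Theorem~\ref{main} to transfer $\curv_A(F_*S)=0$ to $\curv_{\bar{S'}}(F_*\bar{S'})=0$, then apply Proposition~\ref{discrete} to conclude that each $\bar{S'}$ is discrete with regular $\pi_0$, and cite Radu--Andr\'e for the converse. Your version is more explicit than the paper's on two points the paper compresses into a sentence: extracting flatness of $\varphi$ from the discreteness of $S\otimes_R^\bL k_R$, and propagating the closed-fiber conclusion to all fibers by localizing at an arbitrary $\fq\in\Spec S$ (the paper's ``regularity is a local condition'' is exactly this move). One small redundancy: once $\Tor_i^R(S,k_R)=0$ for $i>0$, the local criterion for flatness already gives flatness of $\varphi$ without invoking $\fd_R(S)<\infty$ again.
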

\begin{proof}
    Suppose $F_{S/R}$ has finite flat dimension. Let $k'$ be a finite, purely inseparable field extension of $k_R$, the residue field of $R$ and let $\bar{S'}:=S\otimes_R^\bL k'$. Because $\beta_i^A(F_*S)=0$ for $i\gg0$, Theorem \ref{main} tells us $\beta_i^{\bar{S}}(F_*\bar{S})=0$ for $i\gg0$. By Proposition \ref{discrete}, $\bar{S'}\cong S\otimes_R k'$ is regular and flatness of $\varphi$ follows immediately. The reverse direction follows from \cite{Rad92,And93}.
\end{proof}

From here, we move on to consider CI-dimension, so-named because a ring $R$ is complete intersection if and only if the CI-dimension of every finite $R$-module is finite. This was first defined in \cite{AGP97}.

\begin{definition}
A finite $R$-module $M$ is said to have \emph{finite CI-dimension} if there is a local flat homomorphism $R\to R'$ and a surjective homomorphism $Q\to R'$ with kernel generated by a regular sequence such that $\pd_Q(M\otimes_R R')<\infty$.
\end{definition}

\begin{definition}
Let $\varphi\colon (R,\fm_R)\to(S,\fm_S)$ be a local homomorphism of commutative local rings and let $\grave{\varphi}$ denote the composition of $\varphi$ with the completion map $S\to\hat{S}$. A \emph{Cohen factorization} of $\grave{\varphi}$ is a commutative diagram
\[\begin{tikzcd}
R\arrow[rd,"\dot{\varphi}"]\arrow[rr,"\grave{\varphi}"]&&\hat{S}\\
&R'\arrow[ur,"\varphi'"]&\end{tikzcd}\]
of local homomorphisms such that
\begin{enumerate}
\item[(i)] $\dot{\varphi}$ is flat
\item[(ii)] $R'$ is complete and $R'/\fm_RR'$ regular
\item[(iii)] $\varphi'$ is surjective
\end{enumerate}
\end{definition}

\begin{definition}[\cite{Avr99} \S1]
We say that a map of local rings $\varphi\colon R\to S$ is \emph{complete intersection} at the maximal ideal of $S$ if for some (equiv. any) Cohen factorization $R\to R'\to \hat{S}$ of $\varphi$, the ideal $\ker(R'\to\hat{S})$ is generated by a regular sequence $\mathbf{f'}$.
\end{definition}

Finally, we define the minimal model of a map of local rings. For more details, see \cite{Avramov:1996b}
\begin{definition}
Let $\varphi\colon R\to S$ be a map of local rings. A minimal model for $\varphi$ is a factorisation $R\to A\to S$,
where $A$ is a dg $R$-algebra with the following properties:
\begin{enumerate}
    \item[(1)] $A = R[X]$ is the free strictly graded commutative $R$-algebra on a graded set $X = X_1, X_2, ...$, each $X_i$ being a set of degree $i$ variables;
    \item[(2)] the differential of A satisfies $\partial(\fm_R A)\subseteq \fm_RA + \fm_A^2$;
    \item[(3)] $A\to S$ is a quasi-isomorphism
\end{enumerate}
Note that $A\otimes_Rk_R\cong S\otimes_R^\bL k_R$.
\end{definition}

We are now ready to state and prove the following corollary to Theorem \ref{main}.

\begin{cor}\label{CI}
Let $\varphi\colon R\to S$ be a map of $F$-finite local rings of positive characteristic and let $\fm_S$ be the maximal ideal of $S$. If $\varphi$ has finite flat dimension, then $F_*S$ has finite CI dimension over $S\otimes_RF_*R$ if and only if $\varphi$ is complete intersection at $\fm_S$.
\end{cor}

\begin{proof}
Let $A\colon=S\otimes_RF_*S$ and let $\bar{S}\colon=S\otimes_R^\bL k_R$ where $k_R$ is the residue field of $R$. Since $F_{S/R}$ has finite CI-dimension, $\curv_A(F_*S)\le 1$ by \cite[Theorem~5.3]{AGP97}, and hence $\curv_{\bar{S}}(k_S)\le\curv_{\bar{S}}(F_*\bar{S})\leq 1$ by a combination of Theorem~\ref{main} and Lemma \ref{eth}. We claim that this implies $\varphi$ is complete intersection at $\fm_S$; to deduce this we use the results from \cite[Theorem~3.4]{Avr99}; see also \cite[Theorem~5.4]{Avramov/Iyengar:2003}.

First, we reduce to the case where $\varphi\colon R\to S$ is surjective and $S$ is complete. Note that $S\otimes_R^\bL k_R\to \hat{S}\otimes_R^\bL k_R$ is flat and both rings have common residue field $k_S$. Lemma \ref{flat} tells us that curvature stays constant along this map, and so we can assume $S=\hat{S}$.

Take a Cohen presentation of $\varphi$:
\[
\begin{tikzcd}
R\arrow[rd,"\dot{\varphi}"]\arrow[rr,"\grave{\varphi}"]&&S\\
&R'\arrow[ur,"\varphi'"]&\end{tikzcd}
\]
Let $\bar{R'}=R'\otimes_Rk_R$. This is a regular local ring so its residue field, $k_S$, is resolved by the Koszul complex $K^{\bar{R'}}$ and
\[\left(S\otimes_R^\bL k_R\right)\otimes_{\bar{R'}}K^{\bar{R'}} \cong \left(S\otimes_{R'}^\bL\bar{R'}\right)\otimes_{\bar{R'}}^\bL k_S\cong S\otimes_{R'}^\bL k_S.\]
Thus $S\otimes_{R'}^\bL k_S$ is a Koszul complex on $S\otimes_{R}^\bL k_R$ and so by Corollary \ref{koszul} we see that $\curv_{S\otimes_R^\bL k_R}(k_S)\leq1$ if and only if $\curv_{S\otimes_{R'}^\bL k_S}(k_S)\leq1$.

Thus, we can assume $R\to S$ is a surjective map of complete local rings. Let $k$ be their common residue field. We know that $\curv_{\bar{S}}(k) = \curv_{\bar{S}}(F_*\bar{S})\le1$ and we aim to show the $\varepsilon_n(\varphi)=0$ for $n\geq3$, which will show $R\to S$ is c.i. at $\fm_S$ by \cite[Theorem~3.4]{Avr99}. Here $\varepsilon_n(\varphi)$ is the $n^{th}$ deviation of $\varphi$, defined originally in \cite{Avr99}, though a typo in the formula was corrected in \cite[2.5]{Avramov/Iyengar:2003}. The important fact for us will be that for a minimal model $R\to R[X]\to S$, $\varepsilon_n(\varphi)=\operatorname{card}(X_{n-1})$ for $n\geq3$.

Let $R\to R[X]\to S$ be a minimal model and note that for $n\geq3$ one has
\[
\varepsilon_n(\varphi)=\rank_k\pi_{n-1}(S\otimes_R^\bL k).
\]
Then, because $k[X]\subseteq\Tor^{\bar{S}}(k,k)$ and $\curv_{\bar{S}}(k)\leq1$, $\displaystyle\lim_{n\to\infty}\sqrt[n]{\varepsilon_n(\varphi)}\leq1$, and so, by \cite[Corollary~5.5]{Avramov/Iyengar:2003}, $\varphi$ is complete intersection at $\fm_S$.

For the reverse direction, suppose $R\to S$ is complete intersection at $\fm_S$. Then $\ker(R\to S)$ is generated by a regular sequence $\mathbf{x}$ and $K[R;\mathbf{x}]\simeq S$. Then $\bar{S}\cong K[R;\mathbf{x}]\otimes_Rk$, which is an exterior algebra on $\mathbf{x}$ with zero differential. Then by \cite[Proposition~6.1.7]{Avramov:1996b} or, more directly, \cite[Lemma~1.5]{Avramov/Iyengar:2018}, the divided power algebra $\bar{S}\langle X~|~\partial(X)=x\rangle$, is a resolution of $k$ over $\bar{S}$ and so $k\otimes_{\bar{S}}^\bL k\cong k\langle X\rangle$ which has curvature 1.
\end{proof}

\if0
\subsection*{G-dimension}
With a Radu-Andr\'e-type result for flat and CI-dimension, it is natural to ask whether a similar result can be proven for G-dimension, an analogue of CI-dimension for the Gorenstein property. This was first defined for finite modules in \cite{AB69} and was generalized to modules over local homomorphisms in \cite{ISW04}.

\begin{definition}
Given $M$ a homologically finite complex of $R$-modules, meaning $H(M)$ is degreewise finite and bounded, we say $M$ has \emph{finite $G$-dimension} if the following natural map is an isomorphism
\[\begin{tikzcd}
M\arrow[r,"\sim"]&\RHom_R(\RHom_R(M,R),R).
\end{tikzcd}\]
That is to say, $M$ is \emph{derived reflexive}.
\end{definition}

In the following result, we restrict to a flat homomorphism $\varphi\colon R\to S$, rather than requiring $\varphi$ to have finite flat dimension as in the previous section. This is to avoid discussing the definition of finite $G$-dimension in the setting of derived rings which we plan to take up on a later occasion.

\begin{thm}
Let $R$ and $S$ be $F$-finite local rings of positive characteristic. Let $\varphi\colon R\to S$ be a flat map of local rings. Then the relative Frobenius $F_{S/R}\colon S\otimes_RF_*R\to F_*S$ has finite G-dimension if and only if $S/\fm_R S$ is Gorenstein, i.e. $\varphi$ is Gorenstein at $\fm_S$.
\end{thm}

\begin{proof}
Recall the following diagram
\[\begin{tikzcd}
R\arrow[r,"\varphi"]\arrow[d,"F"]&S\arrow[d]\arrow[rd,"F"]& \\
F_*R\arrow[r]&A\colon=S\otimes_R F_*R\arrow[r,swap,"F_{S/R}"]& F_*S
\end{tikzcd}
\]
where $F_{S/R}$ is the relative Frobenius of $\varphi$ induced by the universal property of the tensor product. The main idea that we exploit to investigate this relationship is that the Frobenius on the derived closed fiber, $F\colon S\otimes_R^\bL k_R\to F_*\left(S\otimes_R^\bL k_R\right)$, can be factored as follows:
\[
\begin{tikzcd}
&S\otimes_RF_*R\arrow[r,"F_{S/R}"]\arrow[d]&F_*S\arrow[d]\\
S\otimes_R^\bL k_R\arrow[r,"(1)"]& S\otimes_R^\bL F_*k_R\arrow[r,"(2)"] &F_*(S\otimes_R^\bL k_R)
\end{tikzcd}
\]
Since the $(1)$ is just the base change of a field extension, if we can show the $(2)$ has finite $G$-dimension, we will be done. So, we simply want to show that finite $G$-dimension is preserved when we base change along $F_*R\to F_*k_R$. Let $A\colon=S\otimes_RF_*R$. Since the relative Frobenius has finite $G$-dimension, we consider the following isomorphism
\[\begin{tikzcd}
F_*S\arrow[r,"\sim"]& \RHom_A(\RHom_A(F_*S,A),A).
\end{tikzcd}\]
Applying $-\otimes^\bL_{F_*R}F_*k_R$, we get
\[\begin{tikzcd}
F_*\otimes_R^\bL F_*Rk_R\simeq F_*(S/\fm_RS)\arrow[r,"\sim"]& \RHom_A(\RHom_A(F_*S,A),A)\otimes^\bL_{F_*R}F_*k_R.
\end{tikzcd}\]
By \cite[Theorem 12.3.22(d)]{Christensen}, the natural map
\[
\RHom_A(\RHom_A(F_*S,A),A)\otimes^\bL_{F_*R}F_*k_R\longrightarrow \RHom_A(\RHom_A(F_*S,A),A\otimes^\bL_{F_*R}F_*k_R)
\]
is an isomorphism. 
\[\begin{tikzcd}
F_*(S/\fm_RS)\arrow[r,"\sim"]& \RHom_{\bar{A}}(\RHom_{\bar{A}}(F_*(S/\fm_RS),\bar{A}),\bar{A})
\end{tikzcd}\]
where again $\bar{A}\colon=S\otimes_RF_*k_R$.

For the reverse direction, consider the exact triangle in $\Ho(R)$
\[F_*S\to\RHom_A(\RHom_A(F_*S,A),A)\to C\]
where $C$ is the cone of the natural map $F_*S\to\RHom_A(\RHom_A(F_*S,A),A)$. If, after applying $-\otimes_R^\bL F_*k_R$ to the triangle, we get that the first map is a quasi-isomorphism, this implies $C\otimes_R^\bL F_*k_R$ is acyclic. By Nakayama's Lemma, this implies $C$ is acyclic.

Thus we have that $S_{S/R}$ has finite $G$-dimension if and only if $F:S\otimes_Rk_R\to F_*(S\otimes_R k_R)$ has finite $G$-dimension, and the result follows from \cite[Theorem~6.2]{TY04}.
\end{proof}
\fi

\bibliographystyle{amsalpha}
\bibliography{main}

\end{document}